\documentclass{article}

\setlength{\textheight}{43pc}
\setlength {\textwidth}{28pc}

\usepackage{amsmath,amssymb,amsbsy,amsfonts,amsthm,latexsym,
            amsopn,amstext,amsxtra,amscd,stmaryrd,fullpage}

\usepackage[mathscr]{eucal}
\usepackage{comment}
\usepackage{color}
\begin{document}

\newtheorem{theorem}{Theorem}
\newtheorem{lemma}[theorem]{Lemma}
\newtheorem{corollary}[theorem]{Corollary}
\newtheorem{proposition}[theorem]{Proposition}

\theoremstyle{definition}
\newtheorem*{definition}{Definition}
\newtheorem*{remark}{Remark}
\newtheorem*{example}{Example}


\def\cA{\mathcal A}
\def\cB{\mathcal B}
\def\cC{\mathcal C}
\def\cD{\mathcal D}
\def\cE{\mathcal E}
\def\cF{\mathcal F}
\def\cG{\mathcal G}
\def\cH{\mathcal H}
\def\cI{\mathcal I}
\def\cJ{\mathcal J}
\def\cK{\mathcal K}
\def\cL{\mathcal L}
\def\cM{\mathcal M}
\def\cN{\mathcal N}
\def\cO{\mathcal O}
\def\cP{\mathcal P}
\def\cQ{\mathcal Q}
\def\cR{\mathcal R}
\def\cS{\mathcal S}
\def\cU{\mathcal U}
\def\cT{\mathcal T}
\def\cV{\mathcal V}
\def\cW{\mathcal W}
\def\cX{\mathcal X}
\def\cY{\mathcal Y}
\def\cZ{\mathcal Z}


\def\sA{\mathscr A}
\def\sB{\mathscr B}
\def\sC{\mathscr C}
\def\sD{\mathscr D}
\def\sE{\mathscr E}
\def\sF{\mathscr F}
\def\sG{\mathscr G}
\def\sH{\mathscr H}
\def\sI{\mathscr I}
\def\sJ{\mathscr J}
\def\sK{\mathscr K}
\def\sL{\mathscr L}
\def\sM{\mathscr M}
\def\sN{\mathscr N}
\def\sO{\mathscr O}
\def\sP{\mathscr P}
\def\sQ{\mathscr Q}
\def\sR{\mathscr R}
\def\sS{\mathscr S}
\def\sU{\mathscr U}
\def\sT{\mathscr T}
\def\sV{\mathscr V}
\def\sW{\mathscr W}
\def\sX{\mathscr X}
\def\sY{\mathscr Y}
\def\sZ{\mathscr Z}


\def\fA{\mathfrak A}
\def\fB{\mathfrak B}
\def\fC{\mathfrak C}
\def\fD{\mathfrak D}
\def\fE{\mathfrak E}
\def\fF{\mathfrak F}
\def\fG{\mathfrak G}
\def\fH{\mathfrak H}
\def\fI{\mathfrak I}
\def\fJ{\mathfrak J}
\def\fK{\mathfrak K}
\def\fL{\mathfrak L}
\def\fM{\mathfrak M}
\def\fN{\mathfrak N}
\def\fO{\mathfrak O}
\def\fP{\mathfrak P}
\def\fQ{\mathfrak Q}
\def\fR{\mathfrak R}
\def\fS{\mathfrak S}
\def\fU{\mathfrak U}
\def\fT{\mathfrak T}
\def\fV{\mathfrak V}
\def\fW{\mathfrak W}
\def\fX{\mathfrak X}
\def\fY{\mathfrak Y}
\def\fZ{\mathfrak Z}


\def\C{{\mathbb C}}
\def\F{{\mathbb F}}
\def\K{{\mathbb K}}
\def\L{{\mathbb L}}
\def\N{{\mathbb N}}
\def\Q{{\mathbb Q}}
\def\R{{\mathbb R}}
\def\Z{{\mathbb Z}}
\def\E{{\mathbb E}}
\def\T{{\mathbb T}}
\def\P{{\mathbb P}}
\def\D{{\mathbb D}}


\def\eps{\varepsilon}
\def\mand{\qquad\mbox{and}\qquad}
\def\\{\cr}
\def\({\left(}
\def\){\right)}
\def\[{\left[}
\def\]{\right]}
\def\<{\langle}
\def\>{\rangle}
\def\fl#1{\left\lfloor#1\right\rfloor}
\def\rf#1{\left\lceil#1\right\rceil}
\def\le{\leqslant}
\def\ge{\geqslant}
\def\ds{\displaystyle}

\def\xxx{\vskip5pt\hrule\vskip5pt}
\def\yyy{\vskip5pt\hrule\vskip2pt\hrule\vskip5pt}
\def\imhere{ \xxx\centerline{\sc I'm here}\xxx }

\newcommand{\comm}[1]{\marginpar{
\vskip-\baselineskip \raggedright\footnotesize
\itshape\hrule\smallskip#1\par\smallskip\hrule}}


\def\e{\mathbf{e}}
\def\sPrc{{\displaystyle \sP_r^{(c)}}}

\title{\bf  The Variance of the Number of Zeros for Complex Random Polynomials Spanned by OPUC
}

\author{
{\sc Aaron M.~Yeager} }

\date{}
\newcommand{\Addresses}{{
  \bigskip
  \footnotesize

  Aaron M.~Yeager, \textsc{Department of Mathematics, College of Coastal Georgia,
    Brunswick, Georgia 31520}\par\nopagebreak
  \textit{E-mail address}: \texttt{ayeager@ccga.edu}
}}

\maketitle
\begin{abstract}
Let $\{\varphi_k\}_{k=0}^\infty $ be a sequence of orthonormal polynomials on the unit circle (OPUC) with respect to a probability measure $ \mu $. We study the variance of the number of zeros of random linear combinations of the form
$$
P_n(z)=\sum_{k=0}^{n}\eta_k\varphi_k(z),
$$
where $\{\eta_k\}_{k=0}^n $ are complex-valued random variables.  Under the assumption that the distribution for each $\eta_k$ satisfies certain uniform bounds for the fractional and logarithmic moments, for the cases when $\{\varphi_k\}$ are regular in the sense of Ullman-Stahl-Totik or are such that the measure of orthogonality $\mu$ satisfies $d\mu(\theta)=w(\theta)d\theta$ where $w(\theta)=v(\theta)\prod_{j=1}^J|\theta - \theta_j|^{\alpha_j}$, with $v(\theta)\geq c>0$, $\theta,\theta_j\in [0,2\pi)$, and $\alpha_j>0$, we give a quantitative estimate on the the variance of the number of zeros of $P_n$ in sectors that intersect the unit circle.  When $\{\varphi_k\}$ are real-valued on the real-line from the Nevai class and $\{\eta_k\}$  are i.i.d.~complex-valued standard Gaussian, we prove a formula for the limiting value of variance of the number of zeros of $P_n$ in annuli that do not contain the unit circle.
\end{abstract}

\textbf{2010 Mathematics Subject Classification :} 30C15, 30E15, 26C10, 60B10.

\textbf{Keywords:} Random Polynomials, Orthogonal Polynomials on the Unit Circle, Nevai Class, Ullman-Stahl-Totik Regularity.

\section{Introduction}

A \emph{random polynomial} is a polynomial of the form
\begin{equation*}\label{algPn}
p_n(z)=\eta_nz^n+\eta_{n-1}z^{n-1}+\cdots +\eta_1 z + \eta_0
\end{equation*}
where $\{\eta_j\}$ are random variables.  The systematic study of zeros of random polynomials has a rich history dating back the classical works starting in the 1930's (c.f. Bloch and P\'{o}lya \cite{BP}, Littlewood and Offord (\cite{LO1}, \cite{LO2}, \cite{LO3}, \cite{LO4}, \cite{LO5})).  For a summary of the early results in the field we refer the reader to the books by Bharucha-Reid and M. Sambandham \cite{BR}, and Farahmond \cite{Fa}.

Let $N_n(\Omega)$ denote the number of zeros of $p_n$ in $\Omega\subset \C$, and $\E$ be the expected value.  When  $\{\eta_j\}$ are i.i.d.~standard  Gaussian, in 1943 Kac \cite{K1} (and independently Rice \cite{R} in 1945) produced an integral equation for  $\E[N_n(\Omega)]$, with $\Omega \subset \R$.
Kac went further to establish the asymptotic
\begin{equation*}
\E[N_n(\R)] = \frac{2+o(1)}\pi\log n \quad \text{as} \quad n\to\infty.
\end{equation*}
The error term in the above asymptotic was later sharpened by Hammersley \cite{HM}, Jamrom (\cite{JA1}, \cite{JA2}), Wang \cite{WG}, Edelman and Kostlan \cite{EK}, and finally Wilkins \cite{WL} who showed that
$$
\E[N_n(\R)] \sim \frac2\pi\log n + \sum_{k=0}^\infty A_kn^{-k},
$$
where $ \{A_k\} $ are constants.

We denote the variance of the number of zeros of a random polynomial in $\Omega \subset \C$ as
\begin{equation}\label{vardef}
\text{Var}[N_n(\Omega)]:=\E[N_n(\Omega)^2]-\E[N_n(\Omega)]^2.
\end{equation}
The first result concerning the variance of the number of real zeros of a random algebraic polynomial with i.i.d.~real-valued standard Gaussian coefficients was an upper bound provided by Stevens \cite{Stevens2} in 1965.  Specifically, in this case he gave the upper bound
$$\textup{Var}[N_n(\R)]<32\E[N_n(\R)]+2.5+(\log n)^2/\sqrt{n}, \quad \text{for} \quad n\geq 32.$$
Soon after, in 1968 Fairly \cite{FLY} computed the exact variances in this case and in the case with the coefficients of the random algebraic polynomial take the values $\pm 1$ with equal probabilities for polynomials of degree up to $11$.

In 1974, Maslova \cite{MAS1} considered the case when the random algebraic polynomial has i.i.d.~real-valued coefficients $\{\eta_k\}$ such that $\P[\eta_k=0]=0$, $\E[\eta_k]=0$, and $\E[|\eta_k|^{2+s}|<\infty$ for some $s>0$.  For this case she established that as $n\rightarrow \infty$ it follows that
\begin{equation*}
\textup{Var}[N_n(\R)]\sim \frac{4}{\pi}\left(1-\frac{2}{\pi}\right)\log n, \quad \text{and}\quad 
\frac{N_n(\R)-\E[N_n(\R)]}{\sqrt{\textup{Var}[N_n(\R)] }} \buildrel d \over \rightarrow N(0,1), \quad \text{as}\quad  n\rightarrow \infty,
\end{equation*}
where $d$ denotes convergence in distribution.
We note that Nguyen and Vu \cite{NVV} have recently generalized Maslova's results to hold under the assumption that the distribution of coefficients $\{\eta_k\}$ are independent (but not necessarily identically distributed) and have  moderate growth.

In this work we study the variance of the number of zeros in $\Omega \subset \C$ for
$$P_n(z)=\sum_{k=0}^n\eta_k\varphi_k(z),$$
where $\{\eta_k\}$ are complex-valued random variables, and $\{\varphi_k\}$ are orthogonal polynomials on the unit circle (OPUC).  The OPUC are polynomials $\{\varphi_k\}$ that are defined by a probability Borel  measure $\mu$ on $\T$ such that
\begin{equation*}
\int_{\T}\varphi_n(e^{i\theta})\overline{\varphi_m(e^{i\theta})}\ d\mu(e^{i\theta})=\delta_{nm}, \quad \text{for all $n,m\in \N \cup \{0\}$}.
\end{equation*}
We note that the measure $\mu$ above is referred to as the measure of orthogonality for $\{\varphi_k\}$.
Observe that OPUC are a direct generalization of the monomials $\{z^k\}$ which have $d\mu(\theta)=d\theta/(2\pi)$.

As the topics in this paper do not cover trigonometric random polynomials, i.e. $\sum_{k=0}^n\eta_k \cos (kx)$, we note that  asymptotics for the variance of the number of real zeros in $[0,2\pi]$ has been well studied (cf. Boomolny, Bohigas, Leboeuf \cite{BBL}, Farahmand \cite{F3}, Grandville and Wigman \cite{GW}, and Su and Shao \cite{SS}).  Similarly we  mention the works of Forrester and Honner \cite{FH}, Hannay \cite{JH}, Shiffman and Zeldtich \cite{SHZ4}, Bleher and Di \cite{BD2}, that concern asymptotics for variance of the number of zeros for weighted random polynomials, i.e. random polynomials of the form $\sum_{k=0}^n \eta_kc_k z^k$ where either $c_k={n\choose k}^{1/2}$ or $c_k=1/k!$.

\section{Main Results}\label{SVCK}

\subsection{Variance of the Number of Zeros in Sectors that intersect the Unit Circle}
 Let
$$A_r(\alpha,\beta)=\{z\in \C : \alpha \leq \arg z<\beta\leq 2\pi , \ 1/r<|z|<r,\  0<r<1 \}.$$

Consider the random orthogonal polynomial
\begin{equation}\label{PNNG}
P_n(z)=\sum_{k=0}^n\eta_{k,n}\varphi_k(z),
\end{equation}
where $\{\varphi_k\}$ are OPUC, and  $\{\eta_{k,n}\}$ are complex-valued random variables such that
\begin{equation}\label{hyp3}
\sup \{\E[|\eta_{k,n}|^t]\ | \ k=0,1,\dots,n, \ n\in \N\}<\infty,\quad  t\in(0,1],
\end{equation}
and
\begin{equation}\label{hyp4}
\min\left( \inf_{n\in \N}\E[\log |\eta_{n,n}|], \ \inf_{n\in \N, z\in \C}\E[\log |\eta_{0,n}+z|] \right)>-\infty.
\end{equation}
We do not assume that random variables $\{\eta_{k,n}\}$ are independent nor identically distributed.  In the case when the random variables are independent, then all uniform bounds in \eqref{hyp3} and \eqref{hyp4} reduce to those of a single random variable $\eta_{0,0}$.  We note that the assumption \eqref{hyp3} is to ensure that the tails of the distribution have sufficient decay, and the assumption \eqref{hyp4} is present so that that random coefficients $\eta_{n,n}$ and $\eta_{0,n}$ do not vanish so often that random orthogonal polynomial has degree less than $n$ or factors as $z$ times a degree $n-1$ polynomial.  Such assumptions hold for a wide class of random variables, which certainly includes Gaussian random variables.


Applying Theorem 3.1 of \cite{PritYgr15} and further estimating we are able to obtain the following:

\begin{lemma} \label{cor3.2}
For the random orthogonal polynomial \eqref{PNNG} with the
uniform estimates \eqref{hyp3} and \eqref{hyp4} on the coefficients $\{\eta_{k,n}\}$, suppose that the measure of orthogonality $\mu$ associated to $\{\varphi_k\}$ is regular in the sense of Ullman-Stahl-Totik, that is,
\begin{equation}\label{USThyp}
\varepsilon_n:=\frac{1}{n}\log |\kappa_n|\rightarrow 0, \quad \text{as} \quad n\rightarrow \infty,
\end{equation}
where $\kappa_n$ is the leading coefficient of $\varphi_n$.  Then
\begin{align} \label{3.8}
\E\left[\left|\frac{N_n(A_r(\alpha,\beta))}{n}-\frac{\beta-\alpha}{2\pi}\right|\right] = \mathcal O \left( \max \left\{ \sqrt{\frac{\log n}{n} },\varepsilon_n^{1/4} \right\} \right), \quad \text{as}  \quad n\to\infty.
\end{align}
\end{lemma}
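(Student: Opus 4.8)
The plan is to extract from Theorem 3.1 of \cite{PritYgr15} an upper bound for the expected discrepancy between the empirical measure of the zeros of $P_n$ and the normalized arclength measure $d\theta/(2\pi)$ on $\T$, and then to convert that discrepancy bound into the sector estimate \eqref{3.8}. First I would recall that the key object in \cite{PritYgr15} is the normalized zero-counting measure $\nu_n = \frac{1}{n}\sum_{P_n(\zeta)=0}\delta_\zeta$, and that under regularity \eqref{USThyp} together with the moment hypotheses \eqref{hyp3}--\eqref{hyp4}, one controls the expected value of a suitable metric (a Kolmogorov-type or annulus-discrepancy distance) between $\nu_n$ and the equilibrium measure of $\T$, which is exactly $d\theta/(2\pi)$. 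The quantitative rate coming out of such an argument is governed by the potential-theoretic error term: one writes $\log|P_n(z)| = \log|\eta_{n,n}\kappa_n| + \sum \log|z-\zeta|$, so $\frac1n\log|P_n(z)|$ is, up to the contribution $\varepsilon_n + \frac1n\log|\eta_{n,n}|$, the logarithmic potential of $\nu_n$; comparing with the potential of $d\theta/(2\pi)$ and using \eqref{hyp4} to handle the endpoint coefficients yields an $L^1$-type bound on $\frac1n\E[\log|P_n|]$ against the expected potential of the equilibrium measure, with error $\mathcal{O}(\varepsilon_n)$ plus a $\mathcal{O}(\sqrt{(\log n)/n})$ term from the fractional-moment tail estimate \eqref{hyp3}.

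Next I would pass from this potential-theoretic estimate to a discrepancy estimate via a standard Erd\H{o}s--Turán / exponential-of-potential argument: an $L^1$ or $L^2$ control on the difference of logarithmic potentials on $\T$ upgrades, by the classical quantitative equidistribution inequalities, to a bound of the form $\E[\sup_{\text{arcs }I}|\nu_n(I) - |I|/(2\pi)|] = \mathcal{O}(\delta_n^{1/2})$ where $\delta_n$ is the potential error. Since the potential error is $\mathcal{O}(\max\{\sqrt{(\log n)/n},\,\varepsilon_n\})$, the square-root loss in the Erd\H{o}s--Turán step produces exactly the exponent $1/4$ on $\varepsilon_n$ and keeps the $\sqrt{(\log n)/n}$ term (whose square root, $ (\log n/n)^{1/4}$, is dominated by $\sqrt{(\log n)/n}$ only up to constants — here I would be careful and simply keep whichever is the larger of the two expressions as written in \eqref{3.8}). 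The final step is purely geometric: the sector $A_r(\alpha,\beta)$ meets $\T$ in the arc $\{e^{i\theta}:\alpha\le\theta<\beta\}$, and all but $o(n)$ of the zeros of $P_n$ cluster on $\T$ (again by regularity), so $N_n(A_r(\alpha,\beta))/n$ differs from $\nu_n(\{\alpha\le\arg\le\beta\})$ by a quantity controlled by the mass $\nu_n$ places away from $\T$, which is itself $\mathcal{O}(\delta_n^{1/2})$ in expectation; combining with $\nu_n(\text{arc}) \approx (\beta-\alpha)/(2\pi)$ gives \eqref{3.8}.

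The main obstacle I anticipate is the bookkeeping in the second step: passing cleanly from the expected potential bound to the expected sup-over-arcs discrepancy while keeping the two error terms separate and tracking exactly where the fourth-root on $\varepsilon_n$ is forced. One must be sure that the Erd\H{o}s--Turán constants do not secretly depend on $n$ through the geometry near the arc endpoints $\alpha,\beta$, and that the "escape mass" $\nu_n(\C\setminus\{1/r<|z|<r\})$ is genuinely controlled in expectation and not merely almost surely — this is precisely what hypotheses \eqref{hyp3}--\eqref{hyp4} are designed to supply, by preventing $P_n$ from losing degree or acquiring a spurious zero at the origin, but invoking them correctly inside an expectation (rather than pathwise) requires the uniform-in-$n$ nature of those bounds. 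Everything else is a routine combination of \cite[Theorem 3.1]{PritYgr15} with classical quantitative potential theory.
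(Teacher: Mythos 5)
Your outline stops short of the actual technical content of this lemma, and the place where it stops is exactly where the exponent $1/4$ has to be earned. The bound from Theorem 3.1 of \cite{PritYgr15} (which your first two steps are essentially re-deriving in potential-theoretic language) contains, inside the square root, the term $\frac{1}{n}\log\max_{0\le k\le n}\|\varphi_k\|_{\infty}$, i.e.\ a quantitative control of the uniform norms of the orthonormal OPUC on $\T$. Under the generalized Jacobi weight hypothesis this term is $\mathcal O((\log n)/n)$ and one gets \eqref{3.7}; under mere Ullman--Stahl--Totik regularity \eqref{USThyp} it is not, and the whole point of the lemma is the estimate
\begin{equation*}
\frac{1}{n}\log\max_{0\le k\le n}\|\varphi_k\|_{\infty}\;\le\;\varepsilon_n+\sqrt{2\varepsilon_n}\;=\;\mathcal O\bigl(\sqrt{\varepsilon_n}\bigr),
\end{equation*}
which the paper proves by writing $\varphi_k=\kappa_k\Phi_k$, invoking $\|\Phi_k\|_\infty\le\exp\bigl(\sum_{j=0}^{k-1}|\alpha_j|\bigr)$ and $\kappa_n=\prod_{j=0}^{n-1}(1-|\alpha_j|^2)^{-1/2}$ from \cite{BS}, and then using Cauchy--Schwarz together with $|\alpha_j|^2<\log\frac{1}{1-|\alpha_j|^2}$ to get $\sum_{j=0}^{n-1}|\alpha_j|\le\sqrt{2n\log|\kappa_n|}$. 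The outer square root of Theorem 3.1 applied to this $\sqrt{\varepsilon_n}$ is what produces $\varepsilon_n^{1/4}$. Your proposal never addresses how regularity controls $\|\varphi_k\|_\infty$ (equivalently, how to bound $|P_n|$ from above near $\T$), so the step "potential error $=\mathcal O(\max\{\sqrt{(\log n)/n},\varepsilon_n\})$" is unjustified: the only input regularity gives you directly is the leading-coefficient term $\frac1n\log|\kappa_n|=\varepsilon_n$, not the sup-norm term.

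There is also an internal inconsistency in your exponent bookkeeping that signals the missing ingredient: if the potential-theoretic error really were $\mathcal O(\varepsilon_n)$, then the square-root loss in the Erd\H{o}s--Tur\'an step would yield $\varepsilon_n^{1/2}$, which is a strictly better rate than \eqref{3.8}; you cannot obtain "exactly the exponent $1/4$" from that accounting. The $1/4$ appears only because the quantity fed into the square root is itself of order $\sqrt{\varepsilon_n}$, and that order comes from the Verblunsky-coefficient/Cauchy--Schwarz argument above. So the proposal as written has a genuine gap (the sup-norm estimate) and, as a consequence, would not reproduce the stated rate by the route you describe; your remaining concerns (Erd\H{o}s--Tur\'an constants, escape mass off $\T$) are already packaged inside the cited Theorem 3.1 and are not where the difficulty lies.
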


Due to Corollary 3.2 in \cite{PritYgr15} (and the comment preceding this Corollary), it was shown that if the measure of orthogonality $\mu$ associated to the OPUC satisfies
$d\mu(\theta)=w(\theta)\,d\theta$, where $w(\theta)$ is a generalized Jacobi weight of the form
\begin{equation}\label{gjw}
w(\theta)=v(\theta)\prod_{j=1}^J|\theta-\theta_j|^{\alpha_j},
\end{equation}
with $v(\theta)\ge c > 0$, $ \theta, \theta_j\in[0,2\pi)$, and $\alpha_j>0$, then
\begin{align} \label{3.7}
\E\left[\left|\frac{N_n(A_r(\alpha,\beta))}{n}-\frac{\beta-\alpha}{2\pi}\right|\right] = \mathcal O\left(\sqrt{\frac{\log{n}}{n}}\right), \quad \text{as} \quad n\to\infty.
\end{align}

We apply \eqref{3.8} and \eqref{3.7} for an estimate of the variance of the number of zeros.
\begin{theorem}\label{VarAnnOPUC}
The random orthogonal polynomial \eqref{PNNG} with $\{\eta_{k,n}\}$  complex-valued random variables satisfying conditions \eqref{hyp3} and \eqref{hyp4} possesses the following properties:
\begin{enumerate}
\item If the measure of orthogonality $\mu$ associated to $\{\varphi_k\}$ satisfies $d\mu(\theta)=w(\theta)d\theta$, where $w(\theta)$ is given by \eqref{gjw}, we have
\begin{equation}\label{asvar}
\frac{\textup{Var}[N_n(A_r(\alpha,\beta))]}{n^2}=\mathcal O \left( \sqrt{\frac{\log n}{n}} \right), \quad \text{as} \quad n\rightarrow \infty.
\end{equation}
\item Under the assumption that $\{\varphi_k\}$ are regular in the sense of Ullman-Stahl-Totik, it follows that
\begin{equation}\label{ustvar}
\frac{\textup{Var}[N_n(A_r(\alpha,\beta))]}{n^2}=\mathcal O \left( \max \left\{ \sqrt{\frac{\log n}{n} },\varepsilon_n^{1/4} \right\} \right), \quad \text{as} \quad n\rightarrow \infty,
\end{equation}
where $\varepsilon_n$ is given by \eqref{USThyp}.
\end{enumerate}
\end{theorem}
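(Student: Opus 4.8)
The plan is to deduce the variance bound directly from the first absolute moment estimates \eqref{3.7} and \eqref{3.8} that are already in hand, using only the deterministic fact that a polynomial of degree at most $n$ has at most $n$ zeros. Write $X_n:=N_n(A_r(\alpha,\beta))/n$ and $c:=(\beta-\alpha)/(2\pi)$. Under \eqref{hyp4} one has $\E[\log|\eta_{n,n}|]>-\infty$, so $\eta_{n,n}\neq 0$ almost surely and $P_n$ has degree exactly $n$ a.s.; hence $P_n$ has exactly $n$ zeros in $\C$ (with multiplicity), giving $0\le X_n\le 1$ almost surely. Since also $c\in[0,1]$, this yields the pointwise bound $|X_n-c|\le 1$.

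First I would use the boundedness of the deviation to dominate its square by its absolute value: $(X_n-c)^2\le|X_n-c|$, and therefore
\begin{equation*}
\E\!\left[(X_n-c)^2\right]\le \E\!\left[|X_n-c|\right]=\E\!\left[\left|\frac{N_n(A_r(\alpha,\beta))}{n}-\frac{\beta-\alpha}{2\pi}\right|\right].
\end{equation*}
Next, since the mean is the $L^2$-minimizer of the squared deviation, $\textup{Var}[X_n]=\E[(X_n-\E[X_n])^2]\le\E[(X_n-c)^2]$. Combining the two inequalities and multiplying by $n^2$ gives
\begin{equation*}
\frac{\textup{Var}[N_n(A_r(\alpha,\beta))]}{n^2}=\textup{Var}[X_n]\le \E\!\left[\left|\frac{N_n(A_r(\alpha,\beta))}{n}-\frac{\beta-\alpha}{2\pi}\right|\right].
\end{equation*}

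It then remains to insert the known rates. For part (1), when $\mu$ has a generalized Jacobi weight of the form \eqref{gjw}, estimate \eqref{3.7} bounds the right-hand side by $\mathcal O(\sqrt{\log n/n})$, yielding \eqref{asvar}. For part (2), under Ullman–Stahl–Totik regularity, Lemma \ref{cor3.2} (i.e.\ \eqref{3.8}) bounds it by $\mathcal O(\max\{\sqrt{\log n/n},\varepsilon_n^{1/4}\})$, yielding \eqref{ustvar}; here the implied constants depend only on $r,\alpha,\beta$ and on the uniform bounds in \eqref{hyp3}–\eqref{hyp4}, not on $n$. I do not expect a genuine obstacle: the only substantive point is the observation that, because $N_n/n$ takes values in the fixed interval $[0,1]$, its second central moment is controlled by the first absolute moment of its deviation from $(\beta-\alpha)/(2\pi)$, so the concentration estimates of \cite{PritYgr15} transfer to the variance with no new analytic input about the OPUC.
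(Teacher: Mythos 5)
Your proposal is correct and follows essentially the same route as the paper: both arguments convert the expected-deviation estimates \eqref{3.7} and \eqref{3.8} into a variance bound using only the fact that $0\le N_n(A_r(\alpha,\beta))/n\le 1$. Your version is just a slightly more streamlined piece of algebra (via $(X_n-c)^2\le|X_n-c|$ and the fact that the mean minimizes the mean-square deviation) than the paper's expansion of $\E[X_n^2]-\E[X_n]^2$, with no new ideas or gaps.
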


\subsection{Variance of the Number of Zeros in Annuli not intersecting the Unit Circle}

We now consider random orthogonal polynomials
\begin{equation}\label{ropucn}
P_n(z)=\sum_{k=0}^n\eta_k\varphi_k(z),
 \end{equation}
 where $\{\eta_k\}$ are i.i.d.~complex-valued standard Gaussian, and $\{\varphi_k\}$ are OPUC that are real-valued on the real-line and from the Nevai Class.  We remind the reader that the Nevai Class of OPUC are the collection of OPUC $\{\varphi_k\}$ with property that locally uniformly for $z\in \D$ we have
\begin{equation*}
\lim_{n\rightarrow \infty}\frac{\varphi_n(z)}{\varphi_n^*(z)}=0,\quad \text{with} \quad \varphi_n^*(z)=z^n\overline{\varphi_n(1/\overline{z})}.
\end{equation*}
We remark that our  assumption that $\{\varphi_k\}$ are real-valued on the real-line is to ensure that
locally uniformly for $z\in \C\setminus \overline{\D}$ it follows that
\begin{equation*}
\lim_{n\rightarrow \infty}\frac{\varphi_n^*(z)}{\varphi_n(z)}=0.
\end{equation*}

In this setting we will study the variance of the number of zeros via examining the first and second correlation functions.  We denote the first and second correlation functions for $P_n(z)$ as $\rho_n^{(1)}(z)$ and $\rho_n^{(2)}(z,w)$, respectively. To see the connection between the variance of the number of zeros and these correlation functions, observe that for a measurable set $\Omega\subset \C$ it follows that
\begin{align}\nonumber
\text{Var}[N_n(\Omega)]&=\E[(N_n(\Omega))^2]-\left(\E[N_n(\Omega)]\right)^2\\
\nonumber
&=\E[N_n(\Omega)]+\E[N_n(\Omega)(N_n(\Omega)-1)]-\left(\E[N_n(\Omega)]\right)^2\\
\label{varform}
&=\int_{\Omega}\rho^{(1)}_n(z)\ dA(z)+\int_{\Omega}\int_{\Omega}\rho^{(2)}_n(z,w)\ dA(z) \ dA(w)
-  \int_{\Omega}\int_{\Omega}\rho^{(1)}_n(z)\rho^{(1)}_n(w)\ dA(z) \ dA(w),
\end{align}
where the equalities
$$\E[N_n(\Omega)]=\int_{\Omega} \rho_n^{(1)}(z)\ dA(z) \quad \text{and} \quad \E[N_n(\Omega)(N_n(\Omega)-1)]=\int_{\Omega}\int_{\Omega}\rho^{(2)}_n(z,w)\ dA(z) \ dA(w)$$
are known results.

For the random orthogonal polynomial \eqref{ropucn}, due to Corollary 2.2 from \cite{AY2}  we have
\begin{equation}\label{fstinop}
\lim_{n\rightarrow \infty}\rho_n^{(1)}(z)=\frac{1}{\pi(1-|z|^2)^2}
\end{equation}
locally uniformly for all $z\in \C \setminus \T$.  While the formula for the second correlation function associated to the random orthogonal polynomial \eqref{ropucn} is rather complicated, its limit as $n$ tends to infinity has a striking simplicity.
\begin{theorem}\label{SECINTOP}
When $z$ and $w$ are both in the unit disk or both in the the exterior of the unit disk, the second correlation function for the random orthogonal polynomial \eqref{ropucn} satisfies
\begin{equation}\label{secintop}
\lim_{n\rightarrow \infty}\rho^{(2)}_n(z,w)=\frac{1}{\pi^2}\left(\frac{1}{(1-|z|^2)^2(1-|w|^2)^2}-\frac{1}{|1-z\overline{w}|^4}\right),
\end{equation}
where the above convergence takes place locally uniformly.
\end{theorem}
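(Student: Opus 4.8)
The plan is to derive the second correlation function from the Kac--Rice formula for complex zeros applied to the Gaussian analytic function $P_n$, and then pass to the limit using the known asymptotics for the reproducing kernel $K_n(z,w)=\sum_{k=0}^n \varphi_k(z)\overline{\varphi_k(w)}$. Since $\{\eta_k\}$ are i.i.d.~complex standard Gaussian, $P_n$ is a Gaussian analytic field with covariance kernel $\E[P_n(z)\overline{P_n(w)}]=K_n(z,w)$, and the Kac--Rice (Hammersley) formula expresses $\rho_n^{(2)}(z,w)$ in terms of the $4\times 4$ covariance matrix of the Gaussian vector $(P_n(z),P_n(w),P_n'(z),P_n'(w))$, i.e.\ in terms of $K_n$ and its derivatives $\partial_z K_n$, $\partial_{\bar w}K_n$, $\partial_z\partial_{\bar w}K_n$ evaluated at the relevant points. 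First I would write $\rho_n^{(2)}(z,w)$ explicitly as a rational expression in these kernel quantities (this is the standard two-point Kac--Rice density for complex Gaussian fields; it reduces, after conditioning $P_n(z)=P_n(w)=0$, to the product of conditional second moments of $P_n'(z)$ and $P_n'(w)$ divided by $\pi^2$ times the Gram determinant).

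Next I would record the asymptotics of the normalized kernel. For OPUC that are real on the real line and in the Nevai class, the remark in the excerpt gives $\varphi_n(z)/\varphi_n^*(z)\to 0$ locally uniformly in $\D$ and $\varphi_n^*(z)/\varphi_n(z)\to 0$ locally uniformly in $\C\setminus\overline{\D}$; combined with the Szeg\H{o}-type asymptotics this yields, after normalization, that for $z,w$ both in $\D$,
\begin{equation*}
\frac{K_n(z,w)}{\sqrt{K_n(z,z)K_n(w,w)}}\longrightarrow \frac{1-|z|^2\cdot\text{(phase)}}{\ \cdots\ }
\end{equation*}
— more precisely the relevant normalized limit is governed by the Szeg\H{o} kernel $S(z,w)=1/(1-z\bar w)$ up to unimodular factors that cancel, exactly as already used to prove \eqref{fstinop} via Corollary 2.2 of \cite{AY2}. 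The key inputs are that $K_n(z,z)$ grows like a constant multiple of $(1-|z|^2)^{-1}$ times $|\varphi_n^*(z)|^2$-type factors, and that the off-diagonal ratio, together with its mixed derivative, converges to the Szeg\H{o} kernel and $\partial_z\partial_{\bar w}\log S$. Plugging these normalized limits into the Kac--Rice rational expression, the normalizing factors $K_n(z,z)$, $K_n(w,w)$ cancel by homogeneity, and one is left with an explicit function of $z,\bar z,w,\bar w$; the claim is that this function simplifies to \eqref{secintop}. The exterior case is handled identically after the inversion $z\mapsto 1/\bar z$, using the companion limit for $\varphi_n^*/\varphi_n$; one may also derive it formally from the interior case via the substitution that sends $(1-|z|^2)^{-2}$ to $(|z|^2-1)^{-2}$ and $|1-z\bar w|^{-4}$ to itself up to the appropriate Jacobian.

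The main obstacle is controlling the \emph{mixed} second derivative $\partial_z\partial_{\bar w}K_n(z,w)$ uniformly — convergence of $K_n$ and $K_n(z,z)$ is comparatively soft, but differentiating an asymptotic relation requires locally uniform convergence of the un-differentiated ratio on a neighborhood (Cauchy estimates / Weierstrass), which is why the hypothesis that the convergence in \eqref{fstinop} and in the Nevai-class limits is \emph{locally uniform} is essential. Concretely, I would fix a compact set $Q\subset\D\times\D$ (or in $(\C\setminus\overline\D)^2$), choose a slightly larger compact neighborhood, show there that $\varphi_n(z)/\varphi_n^*(z)\to 0$ and the normalized kernel converges uniformly to the Szeg\H{o} kernel, then invoke Cauchy's integral formula to upgrade to uniform convergence of all the needed partial derivatives. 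Once that uniform-differentiation step is in place, substituting into Kac--Rice and performing the algebraic simplification — identifying the quantity $\partial_z\partial_{\bar w}\log(1-z\bar w)^{-1}=\bar z w (1-z\bar w)^{-2}$ and the diagonal quantities $\partial_z\partial_{\bar z}\log(1-|z|^2)^{-1}=(1-|z|^2)^{-2}$ — is a routine but somewhat lengthy computation that collapses to the stated two-term formula, and the local uniformity of all the limits is inherited by the final limit.
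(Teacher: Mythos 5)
Your outline is correct, and it reorganizes the argument in a genuinely different way from the paper. Both proofs start from the same exact finite-$n$ expression for $\rho_n^{(2)}$ valid for i.i.d.\ complex Gaussian coefficients (the paper quotes the permanent-over-determinant formula of Hough--Krishnapur--Peres--Vir\'ag, which is the two-point Kac--Rice density you describe, and checks $\det A>0$ off the diagonal so the formula is legitimate). From there the paper works by brute force: it substitutes the Christoffel--Darboux identity for $K_n$, $K_n^{(0,1)}$, $K_n^{(1,1)}$, groups the resulting terms into six sums, and tracks several pages of cancellations using only the Nevai-class ratio asymptotics $\varphi_n/\varphi_n^*\to 0$ in $\D$ (and its reciprocal outside), never passing to a limiting kernel. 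You instead gauge-normalize: divide out $\varphi_{n+1}^*(z)\overline{\varphi_{n+1}^*(w)}$ inside $\D$ (respectively $\varphi_{n+1}(z)\overline{\varphi_{n+1}(w)}$ outside), observe the zero process is unchanged, show the normalized kernel converges locally uniformly to the Szeg\H{o} kernel $1/(1-z\overline{w})$, upgrade to the needed derivatives by Cauchy estimates, and pass to the limit inside the Kac--Rice expression by homogeneity; this reduces everything to the two-point function of the hyperbolic Gaussian analytic function, whose zeros form a determinantal process with the Bergman kernel, which is exactly the stated limit. Your route is more modular and explains \emph{why} the answer has the determinantal shape; the paper's route stays entirely at the level of finite-$n$ identities and needs no gauge-invariance or limiting-GAF input, at the cost of a long computation.

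Three points to tighten before this becomes a proof. First, drop the appeal to ``Szeg\H{o}-type asymptotics'': the Nevai class is strictly larger than the Szeg\H{o} class and provides no strong asymptotics for $\varphi_n^*$ itself; your two ``key inputs'' ($K_n(z,z)\sim|\varphi_{n+1}^*(z)|^2/(1-|z|^2)$ and convergence of the normalized off-diagonal kernel) must instead be derived from the Christoffel--Darboux formula combined with $\varphi_n/\varphi_n^*\to 0$, which is precisely the mechanism the paper uses, so name it. Second, the normalization step needs the fact that $\varphi_{n+1}^*$ is zero-free on $\overline{\D}$ (equivalently $\varphi_{n+1}$ is zero-free in the exterior), and the division in the Kac--Rice ratio needs nondegeneracy of the limiting Gram determinant, i.e.\ $\tfrac{1}{(1-|z|^2)(1-|w|^2)}-\tfrac{1}{|1-z\overline{w}|^2}>0$ for $z\neq w$ on the same side of $\T$; both are easy but must be said. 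Third, the ``routine'' final simplification is the entire body of the paper's proof; under your normalization it genuinely does reduce to the known hyperbolic-GAF/Bergman-kernel computation, but you should either carry that computation out or cite it explicitly rather than assert it.
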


Our next theorem gives the limiting value of the variance of the number of zeros of the random orthogonal polynomial \eqref{ropucn} in an annulus
$$A(s,t)=\{z\in \C : 0\leq s < |z|< t\},$$
that does not contain the unit disk.  We remark that our approach allows one to similarly consider sectors that do not contain the unit circle at the expense the below result taking a complicated shape.

\begin{theorem}\label{VAROPS}
The random orthogonal polynomial \eqref{ropucn}  possesses the property that
\begin{align*}
\lim_{n\rightarrow \infty}\textup{Var}[N_n(A(s,t))]= \begin{cases}
\displaystyle\frac{(t^2-s^2)[1-s^2(t^4(2+s^2)-2)]}{(1-t^4)(1-s^4)(1-(st)^2)}, & A(s,t)\subsetneq \D, \\[3ex]
\displaystyle\frac{(t^2-s^2)[1-t^2(s^4(2+t^2)-2)]}{(1-t^4)(1-s^4)(1-(st)^2)}, & A(s,t) \subsetneq \C\setminus \overline{\D}.
\end{cases}
\end{align*}
\end{theorem}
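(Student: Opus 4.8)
The plan is to combine the correlation-function representation \eqref{varform} of the variance with Theorem~\ref{SECINTOP} and \eqref{fstinop}: first pass the limit $n\to\infty$ inside the two integrals, then evaluate them explicitly. So I would begin by taking $\Omega=A(s,t)$ in \eqref{varform}, assuming $A(s,t)\subsetneq\D$ with $t<1$ (the exterior case $A(s,t)\subsetneq\C\setminus\overline{\D}$ with $s>1$ being entirely analogous). Since $\overline{A(s,t)}$ is then a compact subset of $\C\setminus\T$, \eqref{fstinop} shows that $\rho_n^{(1)}\to 1/\bigl(\pi(1-|\cdot|^2)^2\bigr)$ uniformly on $\overline{A(s,t)}$, and—because both arguments lie in $\D$—Theorem~\ref{SECINTOP} shows that $\rho_n^{(2)}$ converges uniformly on $\overline{A(s,t)}\times\overline{A(s,t)}$. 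The sequence $\{\rho_n^{(1)}\}$ is then uniformly bounded there, so $\rho_n^{(1)}(z)\rho_n^{(1)}(w)$ also converges uniformly; as $A(s,t)$ has finite area, I may interchange limit and integration in \eqref{varform}. Using \eqref{fstinop} and \eqref{secintop}, the $\bigl((1-|z|^2)^2(1-|w|^2)^2\bigr)^{-1}$–terms cancel, leaving $\lim_n\bigl[\rho_n^{(2)}(z,w)-\rho_n^{(1)}(z)\rho_n^{(1)}(w)\bigr]=-\pi^{-2}|1-z\overline{w}|^{-4}$, so that
$$\lim_{n\to\infty}\textup{Var}[N_n(A(s,t))]=\frac1\pi\int_{A(s,t)}\frac{dA(z)}{(1-|z|^2)^2}-\frac1{\pi^2}\int_{A(s,t)}\int_{A(s,t)}\frac{dA(z)\,dA(w)}{|1-z\overline{w}|^4}.$$

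For the single integral, passing to polar coordinates $z=re^{i\theta}$ gives
$$\frac1\pi\int_0^{2\pi}\!\int_s^t\frac{r\,dr\,d\theta}{(1-r^2)^2}=2\int_s^t\frac{r\,dr}{(1-r^2)^2}=\frac1{1-t^2}-\frac1{1-s^2}=\frac{t^2-s^2}{(1-t^2)(1-s^2)}.$$
For the double integral, I would write $|1-z\overline{w}|^{-4}=(1-z\overline{w})^{-2}(1-\overline{z}w)^{-2}$ and note that $|z\overline{w}|=|z|\,|w|\le t^2<1$ on $\overline{A(s,t)}\times\overline{A(s,t)}$, so expanding $(1-x)^{-2}=\sum_{k\ge0}(k+1)x^k$ at $x=z\overline{w}$ and $x=\overline{z}w$ produces a double series converging uniformly there. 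Integrating term by term in polar coordinates, orthogonality of $\{e^{ik\theta}\}$ annihilates all but the diagonal terms, and summing the resulting geometric series yields
$$\frac1{\pi^2}\int_{A(s,t)}\int_{A(s,t)}\frac{dA(z)\,dA(w)}{|1-z\overline{w}|^4}=\sum_{m\ge1}\bigl(t^{2m}-s^{2m}\bigr)^2=\frac{t^4}{1-t^4}+\frac{s^4}{1-s^4}-\frac{2(st)^2}{1-(st)^2}.$$
Subtracting this from the single integral and simplifying gives the first branch of the claimed formula. In the exterior case $1<s<t$ the computation is identical, except that now $|z\overline{w}|=|z|\,|w|\ge s^2>1$, so I would instead use $(1-x)^{-2}=\sum_{m\ge2}(m-1)x^{-m}$, valid for $|x|>1$; the diagonal terms then sum to $\sum_{p\ge1}(s^{-2p}-t^{-2p})^2=(s^4-1)^{-1}+(t^4-1)^{-1}-2\bigl((st)^2-1\bigr)^{-1}$, and subtracting this from $(t^2-s^2)/\bigl((1-t^2)(1-s^2)\bigr)$ yields the second branch.

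I expect the passage to the limit to be routine once one observes that $\overline{A(s,t)}$ stays bounded away from $\T$, so the uniform convergence of $\rho_n^{(1)}$ and $\rho_n^{(2)}$ on the relevant compact sets is immediate. The real work lies in the double integral: choosing the correct expansion of $(1-x)^{-2}$ in each of the two regimes, justifying the term-by-term integration from the uniform convergence, recognizing that only the diagonal terms survive the angular integration, and then carrying out the final algebraic reduction of the difference of the two explicit geometric sums to the two rational functions displayed in the theorem.
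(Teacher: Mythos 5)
Your proposal is correct and follows essentially the same route as the paper: combine \eqref{varform} with \eqref{fstinop} and Theorem~\ref{SECINTOP}, pass the limit inside using locally uniform convergence away from $\T$, reduce to the single integral of $(1-|z|^2)^{-2}$ minus the double integral of $|1-z\overline{w}|^{-4}$, and evaluate the latter by expanding $(1-z\overline{w})^{-2}(1-\overline{z}w)^{-2}$ in the appropriate (interior or exterior) series, keeping only the diagonal terms after the angular integration, and summing the geometric series. Your diagonal sums and the final algebraic reduction agree with the paper's computation in both regimes.
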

We note that taking $s=0$ and $t<1$ in the above theorem, we achieve that the random orthogonal polynomial \eqref{ropucn} gives the simple formula:
\begin{equation*}
\lim_{n\rightarrow \infty}\textup{Var}[N_n(D(0,t))]=\frac{t^2}{1-t^4},
\end{equation*}
where $D(0,t)=\{z\in \C : |z|<t<1\}$.

\section{The Proofs}

\begin{proof}[Proof of Lemma \ref{cor3.2}]
Setting
\begin{equation*}
\varphi_k(z)=\kappa_{k,k}z^k+a_{k-1,k}z^{k-1}+a_{k-2,k}z^{k-2}+\cdots +a_{0,k},\quad k\in \{0,1,\dots,n\},
\end{equation*}
 by Theorem 3.1 of \cite{PritYgr15} for all large $n\in \N$ we have
\begin{align} \label{3.2}
\E\left[\left|\frac{N_n(A_r(\alpha,\beta))}{n}-\frac{\beta-\alpha}{2\pi}\right|\right]  \nonumber &\le C_r \left[\frac{1}{n}\left(\frac{1}{t}\log \left(\sum_{k=0}^n \E[|\eta_k|^t]\right) + \log \max_{0 \le k \le n} \|\varphi_k\|_{\infty} - \frac{1}{2} \E[\log|D_n|]\right)\right]^{1/2},
\end{align}
where
$$D_n := \eta_n \kappa_{n,n} \sum_{k=0}^n \eta_k a_{0,k}\quad \text{and} \quad C_r = \sqrt{\frac{2\pi}{\mathbf{k}}}+\frac{2}{1-r}. $$

Observe that the uniform bounds \eqref{hyp3} and \eqref{hyp4} on the expectations for the coefficients  immediately give that
\begin{align*}
\frac{1}{tn}\log \left(\sum_{k=0}^n \E[|\eta_{k,n}|^t]\right) = O\left(\frac{\log n}{n}\right) \quad\mbox{and}\quad \frac{1}{2n} \E[\log|D_n|] \ge \frac{1}{n} \log|\kappa_{n,n}| + O\left(\frac{1}{n}\right).
\end{align*}
Thus to establish \eqref{3.7},  it suffices to show
\begin{equation}
\frac{1}{n}\log \max_{0\leq k \leq n}\|\varphi_k\|_{\infty}=\mathcal O(\sqrt{\varepsilon_n}),
\end{equation}
where $\varepsilon_n=\log|\kappa_{n,n}|/n$.

Writing $\kappa_{k,k}=\kappa_k$, equation 1.5.22 of \cite{BS} gives
\begin{equation}\label{kapparep}
\kappa_{k}=\prod_{j=0}^{k-1}(1-|\alpha_{j}|^2)^{-1/2},
\end{equation}
where $\{\alpha_j\}\subset \D$ are recurrence coefficients coming from the three term recurrence relation (c.f. Theorem 1.5.4 \cite{BS}):
\begin{equation*}
\varphi_{j+1}(z)=\frac{ z \varphi_j(z)-\bar{\alpha}_j\varphi_{j}^*(z)}{\sqrt{1-|\alpha_j|^2}},\quad j=0,1,\dots,
\end{equation*}
with $\varphi_j^*(z)=z^j\overline{\varphi_j(1/\bar{z})}$. For the normalized OPUC, denoted as $\Phi_k(z)$, we have $\varphi_k(z)=\kappa_{k}\Phi_k(z)$, so that appealing to \eqref{kapparep} and (1.5.17) of Theorem 1.5.3 in \cite{BS} yields
\begin{align*}
\log \max_{0\leq k \leq n}\|\varphi_k\|_{\infty}&=\log \max_{0\leq k \leq n}\|\kappa_{k}\Phi_k(z)\|_{\infty}\\
&\leq \log \left( |\kappa_n| \max_{0\leq k \leq n}\|\Phi_k(z)\|_{\infty} \right)\\
&\leq \log \left(|\kappa_n|\max_{0\leq k \leq n} \exp\left( \sum_{j=0}^{k-1}|\alpha_j| \right)  \right)\\
&\leq \log \left(|\kappa_n| \exp\left( \sum_{j=0}^{n-1}|\alpha_j| \right)  \right)\\
&= \log |\kappa_n| + \sum_{j=0}^{n-1}|\alpha_j|\\
&\leq \log |\kappa_n| + \left(n\sum_{j=0}^{n-1}|\alpha_j|^2\right)^{1/2},
\end{align*}
where we have relied on the Cauchy-Schwarz inequality in the last inequality.  To estimate the second term above, notice that since each $\alpha_j \in \D$, we have
$$\log \frac{1}{1-|\alpha_j|^2}=\sum_{k=1}^{\infty}\frac{|\alpha_j|^{2k}}{k}>|\alpha_j|^2.$$
Therefore
\begin{align}\nonumber
\frac{1}{n}\log \max_{0\leq k \leq n}\|\varphi_k\|_{\infty}\leq \frac{1}{n}\left(\log |\kappa_n| +\left(n\sum_{j=0}^{n-1}\log \frac{1}{1-|\alpha_j|^2}\right)^{1/2}\right)
=\frac{1}{n}\left(\log |\kappa_n|+\left(2n\log |\kappa_n|\right)^{1/2}\right)
\leq \mathcal O \left(\sqrt{\varepsilon_n}\right),
\end{align}
where we have relied on \eqref{kapparep} for the second term in the equality,  which completes the desired estimate to give \eqref{3.8}.
\end{proof}

\begin{proof}[Proof of Theorem \ref{VarAnnOPUC}]
We first prove \eqref{asvar}.  
Observe that
\begin{align}\nonumber
\E\left[ \left| \left(\frac{N_n(A_r(\alpha,\beta))}{n}\right)^2- \left(\frac{\beta-\alpha}{2\pi}\right)^2 \right| \right]&=\E\left[ \left| \left(\frac{N_n(A_r(\alpha,\beta))}{n}-\frac{\beta-\alpha}{2\pi}\right)\left(\frac{N_n(A_r(\alpha,\beta))}{n}+\frac{\beta-\alpha}{2\pi} \right) \right| \right]\\[1ex]
\nonumber
&\leq 2\cdot \E\left[ \left| \frac{N_n(A_r(\alpha,\beta))}{n}-\frac{\beta-\alpha}{2\pi} \right| \right]\\[1ex]
\label{secmomcal}
&=\mathcal O \left( \sqrt{ \frac{\log n}{n} } \right),
\end{align}
where we have appealed to \eqref{3.7} in the last equality.
Thus
\begin{equation*}
\E\left[ \left(\frac{N_n(A_r(\alpha,\beta))}{n}\right)^2 \right]\leq  \mathcal O \left( \sqrt{ \frac{\log n}{n} } \right)+ \left(\frac{\beta-\alpha}{2\pi}\right)^2.
\end{equation*}
Therefore
\begin{align}
\nonumber
\frac{\text{Var}[N_n(A_r(\alpha,\beta))]}{n^2}&=\frac{1}{n^2}\E[N_n(A_r(\alpha,\beta))^2]-\frac{1}{n^2}\left(\E[N_n(A_r(\alpha,\beta))]  \right)^2\\[5pt]
\nonumber
&=\E\left[\left(\frac{N_n(A_r(\alpha,\beta))}{n}\right)^2\right]-\E\left[\frac{N_n(A_r(\alpha,\beta))}{n}\right]^2\\[5pt]
\nonumber
&\leq \mathcal O \left(\sqrt{\frac{\log n}{n}}\right)+\left( \frac{\beta-\alpha}{2\pi} \right)^2-\E\left[\frac{N_n(A_r(\alpha,\beta))}{n}\right]^2\\[5pt]
\nonumber
&=\mathcal O \left(\sqrt{\frac{\log n}{n}}\right)+ \left( \frac{\beta-\alpha}{2\pi}- \E\left[\frac{N_n(A_r(\alpha,\beta))}{n}\right]\right)\left( \frac{\beta-\alpha}{2\pi}+ \E\left[\frac{N_n(A_r(\alpha,\beta))}{n}\right] \right)\\[5pt]
\nonumber
&\leq \mathcal O \left(\sqrt{\frac{\log n}{n}}\right)+\E\left[ \left| \frac{N_n(A_r(\alpha,\beta))}{n}-\frac{\beta-\alpha}{2\pi} \right| \right]\left( \frac{\beta-\alpha}{2\pi}+ 1 \right)\\[5pt]
\nonumber
&=\mathcal O \left(\sqrt{\frac{\log n}{n}}\right)+\mathcal O \left(\sqrt{\frac{\log n}{n}}\right)\\[5pt]
\label{varcal}
&= \mathcal O \left(\sqrt{\frac{\log n}{n}}\right),
\end{align}
which gives the result of \eqref{asvar}.

For the case $\{\varphi_k\}$ are regular in the sense of Ullman-Stahl-Totik, as Lemma \ref{cor3.2} gave
\begin{align*}
\E\left[\left|\frac{N_n(A_r(\alpha,\beta))}{n}-\frac{\beta-\alpha}{2\pi}\right|\right] = \mathcal O \left( \max \left\{ \sqrt{\frac{\log n}{n} },\varepsilon_n^{1/4} \right\} \right) ,
\end{align*}
repeating the computation \eqref{secmomcal} we see that
\begin{equation*}
\E\left[ \left(\frac{N_n(A_r(\alpha,\beta))}{n}\right)^2 \right]\leq  \mathcal O \left( \max \left\{ \sqrt{\frac{\log n}{n} },\varepsilon_n^{1/4} \right\} \right)+ \left(\frac{\beta-\alpha}{2\pi}\right)^2.
\end{equation*}
Hence repeating the computation \eqref{varcal} we achieve
\begin{equation*}
\frac{\text{Var}[N_n(A_r(\alpha,\beta))]}{n^2}=\mathcal O \left( \max \left\{ \sqrt{\frac{\log n}{n} },\varepsilon_n^{1/4} \right\} \right),
\end{equation*}
and thus completing the desired result \eqref{ustvar}.
\end{proof}

\begin{proof}[Proof of Theorem \ref{SECINTOP}]

For the random sum $f_n(z)=\sum_{k=0}^n\eta_kp_j(z)$, where $\{\eta_j\}$ are complex valued i.i.d.~Gaussian random variables and $\{p_j(z)\}$ are a polynomial basis with the degree of $p_j(z)$ equal to $j$, Corollary 3.4.2 of \cite{ZGAF} gives the following formulas for the correlation functions:
\begin{equation}\label{fsecint1}
\rho^{(m)}_n(z_1,\dots,z_m)=\frac{\textup{Perm}(C-B^*A^{-1}B)}{\pi^m\textup{Det(A)}},
\end{equation}
where $\textup{Perm}(\cdot)$ denotes the permanent of a matrix, $B^*$ is the conjugate transpose of the matrix $B$, and
\begin{align*}
A&=\left[\E\left[p_n(z_i)\overline{p_n(z_j)}\right]\right]_{0\leq i, j\leq m}=\left[ \sum_{k=0}^np_k(z_i)\overline{p_k(z_j)} \right]_{0\leq i,j\leq m}:=[K_n(z_i,z_j)]_{0\leq i, j\leq m},\\
B&=\left[\E\left[p_n(z_i)\overline{p_n^{\prime}(z_j)}\right]\right]_{0\leq i, j\leq m}=\left[ \sum_{k=0}^np_k(z_i)\overline{p_k^{\prime}(z_j)} \right]_{0\leq i,j\leq m}:=[K^{(0,1)}_n(z_i,z_j)]_{0\leq i, j\leq m},\\
C&=\left[\E\left[p_n^{\prime}(z_i)\overline{p_n^{\prime}(z_j)}\right]\right]_{0\leq i, j\leq m}=\left[ \sum_{k=0}^np_k^{\prime}(z_i)\overline{p_k^{\prime}(z_j)} \right]_{0\leq i,j\leq m}:=[K^{(1,1)}_n(z_i,z_j)]_{0\leq i, j\leq m},
\end{align*}
with the second equality in each row above following from the property that the random variables $\{\eta_j\}$ have mean zero and variance one.  We remark that for the second correlation function with $m=2$ in the above and $z_1=z$ and $z_2=w$, in this case we have
\begin{align*}
\det A &= K_n(z,z)K_n(w,w)-K_n(z,w)K_n(w,z)\\
&=\sum_{j=0}^n|p_j(z)|^2\sum_{j=0}^n|p_j(w)|^2-\left| \sum_{j=0}^np_j(z)\overline{p_j(w)} \right|^2\\
&=\sum_{k=1}^n|p_0(z)p_k(w)-p_0(w)p_k(z)|^2+\sum_{k=2}^n|p_1(z)p_k(w)-p_1(w)p_k(z)|^2\\
&\ \ +\cdots + \sum_{k=n-2}^n|p_{n-3}(z)p_k(w)-p_{n-3}(w)p_k(z)|^2+|p_n(z)p_{n-1}(w)-p_n(w)p_{n-1}(z)|^2\\
&\geq |p_0(z)p_1(w)-p_0(w)p_1(z)|^2.
\end{align*}
As $\{p_j\}$ is a polynomial basis with $\deg p_j=j$ for all $j\in \N\cup \{0\}$, we have $p_0(z)=c$ and $p_1(z)=az +b$, for some constants $a,b,c,$ with $a,c\neq 0$.  Thus
\begin{align*}
|p_0(z)p_1(w)-p_0(w)p_1(z)|^2=|c(aw+b)-c(az+b)|^2=|ca(w-z)|^2.
\end{align*}
Hence we see that $\det A > 0$ for all $z\neq w$. As $\rho_n^{(2)}(z,z)=0$, we see that the representation
\begin{equation*}
\rho_n^{(2)}(z,w)=\frac{\textup{Perm}(C-B^*A^{-1}B)}{\pi^m\textup{Det(A)}}
\end{equation*}
is valid everywhere for all random polynomials spanned by a polynomial basis.

Expanding the permanent and the determinant in the definition of second correlation function, one sees that $\rho_n^{(2)}(z,w)$ can be written as
\begin{equation}\label{fsecint2}
\pi^2\rho_n^{(2)}(z,w)=f_n(z,w)f_n(w,z)+g_n(z,w)g_n(w,z),
\end{equation}
where
\begin{align*}
f_n(z,w)&=\frac{K_n^{(1,1)}(z,z)}{(K_n(z,z)K_n(w,w)-|K_n(z,w)|^2)^{1/2}}\\
&\quad +\frac{2\text{Re}\left( K_n(z,w)\overline{ K_n^{(0,1)}(z,z) }K_n^{(0,1)}(w,z) \right)}{(K_n(z,z)K_n(w,w)-|K_n(z,w)|^2)^{3/2}}\\
&\quad -\frac{K_n(w,w)|K_n^{(0,1)}(z,z)|^2+K_n(z,z)|K_n^{(0,1)}(w,z)|^2}{(K_n(z,z)K_n(w,w)-|K_n(z,w)|^2)^{3/2}},
\end{align*}
and
\begin{align*}
g_n(z,w)&=\frac{K_n^{(1,1)}(z,w)}{(K_n(z,z)K_n(w,w)-|K_n(z,w)|^2)^{1/2}}\\
&\quad +\frac{ K_n(z,w)\overline{ K_n^{(0,1)}(z,z) }K_n^{(0,1)}(w,w)+ \overline{K_n(z,w) K_n^{(0,1)}(w,z) }K_n^{(0,1)}(z,w)}{(K_n(z,z)K_n(w,w)-|K_n(z,w)|^2)^{3/2}}\\
&\quad -\frac{K_n(w,w)\overline{K_n^{(0,1)}(z,z)}K_n^{(0,1)}(z,w)+K_n(z,z)\overline{K_n^{(0,1)}(w,z)}K_n^{(0,1)}(w,w)}{(K_n(z,z)K_n(w,w)-|K_n(z,w)|^2)^{3/2}}.
\end{align*}

We will simplify the above formulas via the Christoffel-Darboux formula.  For a collection of OPUC $\{\varphi_j\}_{j\geq 0}$, the Christoffel-Darboux formula  (c.f.~Theorem 2.2.7, p. 124 of \cite{BS}) states that for $z,w\in \C$ with $\bar{w}z\neq 1$, we have
\begin{equation}\label{CDUP}
K_n(z,w)=\sum_{j=0}^{n}\varphi_j(z)\overline{\varphi_j(w)}= \frac{\overline{\varphi_{n+1}^{*}(w)}\varphi_{n+1}^{*}(z)-\overline{\varphi_{n+1}(w)}\varphi_{n+1}(z)}{1-\bar{w}z},
\end{equation}
where $\varphi_n^{*}(z)=z^n \overline{\varphi_n\left(\frac{1}{\bar{z}}\right)}$.


Taking the derivative of \eqref{CDUP} with respect to $\overline{w}$ yields
\begin{align}\label{k01}
K_n^{(0,1)}(z,w)&=\sum_{j=0}^{n}\varphi_j(z)\overline{\varphi_j^{\prime}(w)}=\frac{S_n(z,w)}{1-z\overline{w}}+\frac{zK_n(z,w)}{1-z\overline{w}},
\end{align}
with
\begin{equation}
S_n(z,w)=\overline{(\varphi_{n+1}^*)^{\prime}(w)}\varphi_{n+1}^*(z)-\overline{\varphi^{\prime}_{n+1}(w)}\varphi_{n+1}(z).
\end{equation}
Differentiating \eqref{k01} with respect to $z$ gives
\begin{align}\label{k11}
K_n^{(1,1)}(z,w)&=\sum_{j=0}^{n}\varphi_j^{\prime}(z)\overline{\varphi_j^{\prime}(w)}=\frac{R_n(z,w)(1-z\overline{w})+z\overline{S_n(w,z)}+\overline{w}S_n(z,w)+(1+z\overline{w})K_n(z,w)}{(1-z\overline{w})^2},
\end{align}
with
\begin{equation}
R_n(z,w)=\overline{(\varphi_{n+1}^*)^{\prime}(w)}(\varphi_{n+1}^*)^{\prime}(z)-\overline{\varphi^{\prime}_{n+1}(w)}\varphi_{n+1}^{\prime}(z).
\end{equation}

Let us rewrite \eqref{fsecint2} as
\begin{equation}
\label{fsecint3}
\pi^2 \rho_n^{(2)}(z,w)= \frac{\tilde{f}_n(z,w)\tilde{f}_n(w,z)+\tilde{g}_n(z,w)\tilde{g}_n(w,z)}{ (K_n(z,z)K_n(w,w)-|K_n(z,w)|^2)^{3}}
\end{equation}
where
\begin{align}
\label{s1}
\tilde{f}_n(z,w)&=K_n^{(1,1)}(z,z)(K_n(z,z)K_n(w,w)-|K_n(z,w)|^2)\\
\label{s2}
&\quad +2\text{Re}\left( K_n(z,w)\overline{ K_n^{(0,1)}(z,z) }K_n^{(0,1)}(w,z) \right)\\
\label{s3}
&\quad -K_n(w,w)|K_n^{(0,1)}(z,z)|^2+K_n(z,z)|K_n^{(0,1)}(w,z)|^2,
\end{align}
and
\begin{align}
\label{s4}
\tilde{g}_n(z,w)&=K_n^{(1,1)}(z,w)(K_n(z,z)K_n(w,w)-|K_n(z,w)|^2)\\
\label{s5}
&\quad + K_n(z,w)\overline{ K_n^{(0,1)}(z,z) }K_n^{(0,1)}(w,w)+ \overline{K_n(z,w) K_n^{(0,1)}(w,z) }K_n^{(0,1)}(z,w)\\
\label{s6}
&\quad -K_n(w,w)\overline{K_n^{(0,1)}(z,z)}K_n^{(0,1)}(z,w)+K_n(z,z)\overline{K_n^{(0,1)}(w,z)}K_n^{(0,1)}(w,w).
\end{align}

We now introduce the notation that $b_n(z):=\varphi_n(z)/\varphi_n^*(z)$. Observe that since the OPUC $\{\varphi_n\}$ have real coefficients, we have $b_n^{-1}(z)=b_n(1/z)$.  Thus the condition of the OPUC being from the Nevai class can be restated as
\begin{align}\label{phiasy}
\begin{cases}
      b_n(z)\rightarrow 0, &\text{locally uniformly in $|z|<1$,} \\
      b_n^{-1}(z)\rightarrow 0, &\text{locally uniformly in $|z|>1$.}
   \end{cases}
\end{align}
Consequently
\begin{align}\label{dphiasy}
\frac{\varphi_n^{\prime}(z)\varphi_n^*(z)-\varphi_n(z)(\varphi_n^*)^{\prime}(z)}{\phi_n^2(z)}=\begin{cases}
      b_n^{\prime}(z)\rightarrow 0, &\text{locally uniformly in $|z|<1$,} \\
      -(b_n^{-1})^{\prime}(z)\rightarrow 0, &\text{locally uniformly in $|z|>1$,}
   \end{cases}
\end{align}
where
\begin{align*}
\phi_n(z):=\begin{cases}
      \varphi_n^*(z), & |z|<1, \\
      \varphi_n(z), & |z|>1.
   \end{cases}
\end{align*}
Hence appealing to the above and \eqref{CDUP}, we see that as $n\rightarrow \infty$ the denominator of $\pi\rho_n^{(2)}(z,w)$ is
\begin{align}
\nonumber
(K_n(z,z)K_n(w,w)-&|K_n(z,w)|^2)^3\\
\label{rhodeom}
&=|\phi_{n+1}(z)\phi_{n+1}(w)|^6\left[\left(\frac{1}{(1-|z|^2)(1-|w|^2)}-\frac{1}{|1-z\overline{w}|^2} \right)^3+o(1)\right].
\end{align}

We now find the asymptotic for $\tilde{f}_n(z,w)$.  Set
$$S_1(z,w)= \eqref{s1}, \quad S_2(z,w) = \eqref{s2}, \quad  S_3(z,w) = \eqref{s3}.$$
 Using \eqref{k01} and \eqref{k11} we see that
\begin{align}
\label{S1}
S_1(z,w)&=\frac{(1+|z|^2)K_n(z,z)}{(1-|z|^2)^2}(K_n(z,z)K_n(w,w)-|K_n(z,w)|^2)\\
\label{S2}
&\quad +\frac{2\textup{Re}\left(\overline{z}S_n(z,z)\right)}{(1-|z|^2)^2}(K_n(z,z)K_n(w,w)-|K_n(z,w)|^2)\\
\label{S3}
&\quad + \frac{R_n(z,z)}{1-|z|^2}(K_n(z,z)K_n(w,w)-|K_n(z,w)|^2),\\
\label{S4}
S_2(z,w)&=-\frac{|w|^2K_n(z,z)|K_n(z,w)|^2}{|1-z\overline{w}|^2}-\frac{|z|^2K_n(z,z)^2K_n(w,w)}{(1-|z|^2)^2}\\
\label{S5}
&\quad -\frac{2K_n(z,z)\textup{Re}\left( \overline{w}S_n(w,z)K_n(z,w) \right)}{|1-z\overline{w}|^2}-\frac{2K_n(z,z)K_n(w,w)\textup{Re}\left(\overline{z}S_n(z,z)  \right)}{(1-|z|^2)^2}\\
\label{S6}
&\quad -\frac{K_n(z,z)|S_n(w,z)|^2}{|1-z\overline{w}|^2}-\frac{K_n(w,w)|S_n(z,z)|^2}{(1-|z|^2)^2},\\
\label{S7}
S_3(z,w)&=\frac{K_n(z,z)|K_n(z,w)|^2}{1-|z|^2}\left( \frac{1-|zw|^2}{|1-z\overline{w}|^2}-1 \right)\\
\label{S81}
&\quad +\frac{2|K_n(z,w)|^2}{1-|z|^2}\textup{Re}\left(\frac{\overline{w}S_n(z,z)}{1-z\overline{w}}  \right)\\
\label{S82}
&\quad +\frac{2K_n(z,z)}{1-|z|^2}\textup{Re}\left(\frac{\overline{z}K_n(z,w)S_n(w,z)}{1-\overline{z}w}  \right)\\
\label{S9}
&\quad + \frac{2}{1-|z|^2}\textup{Re}\left( \frac{K_n(z,w)\overline{S_n(z,z)}S_n(w,z)}{1-\overline{z}w} \right),
\end{align}
where we have made use of the identity $2\textup{Re}(z\overline{w})=1+|zw|^2-|1-z\overline{w}|^2$ to get the expression in parentheses of \eqref{S7} in the shape it is in.

We now define
\begin{align*}
\Sigma_{n,1}(z,w)&:=\eqref{S1}+\eqref{S4}+\eqref{S7}\\
\Sigma_{n,2}(z,w)&:= \eqref{S2}+\eqref{S5}+\eqref{S81}+\eqref{S82}\\
\Sigma_{n,3}(z,w)&:= \eqref{S3}+\eqref{S6}+\eqref{S9}.
\end{align*}

Simplifying and then appealing to \eqref{CDUP}, \eqref{phiasy}, and \eqref{dphiasy}, we see that
\begin{align}
\nonumber
&\Sigma_{n,1}(z,w)=\frac{K_n(z,z)^2K_n(w,w)}{(1-|z|^2)^2}-\frac{K_n(z,z)|K_n(z,w)|^2}{1-|z|^2}\left( \frac{2}{1-|z|^2}+\frac{|w|^2-1}{|1-z\overline{w}|^2}  \right)\\
\nonumber
&\quad =|\phi_{n+1}(z)|^4|\phi_{n+1}(w)|^2\Bigg[ \frac{1}{(1-|z|^2)^4(1-|w|^2)}
-\frac{1}{(1-|z|^2)|1-z\overline{w}|^2}\left( \frac{2}{1-|z|^2}+\frac{|w|^2-1}{|1-z\overline{w}|^2}  \right)+o(1) \Bigg]\\
\label{Sig1}
&\quad =|\phi_{n+1}(z)|^4|\phi_{n+1}(w)|^2\left( \frac{|z-w|^4}{(1-|z|^2)^4(1-|w|^2)|1-z\overline{w}|^4}+o(1) \right).
\end{align}

Turning now  to the asymptotic for $\Sigma_{n,2}(z,w)$, observe the first summand of \eqref{S2} and second summand of \eqref{S5} cancel algebraically, and that the sum of the second summand in \eqref{S2} and the first summand of \eqref{S81} simplify to
\begin{align}
\label{Sig21}
2\frac{|K_n(z,w)|^2}{(1-|z|^2)^2}\textup{Re}\left( \frac{(\overline{w}-\overline{z})S_n(z,z)}{1-z\overline{w}} \right).
\end{align}
The sum of the first summand of \eqref{S5} and  \eqref{S82} collect to give
\begin{align}\label{Sig22}
\frac{2K_n(z,z)}{(1-|z|^2)|1-z\overline{w}|^2}\textup{Re}\left( (\overline{z}-\overline{w})K_n(z,w)S_n(w,z) \right).
\end{align}
Combining \eqref{Sig21} with \eqref{Sig22} then appealing to \eqref{CDUP}, \eqref{k01}, \eqref{k11} to simplify the expressions, and finally using the asymptotics of \eqref{phiasy} and \eqref{dphiasy} on sees
\begin{align}
\nonumber
\Sigma_{n,2}(z,w)&=\frac{1}{(1-|z|^2)^2|1-z\overline{w}|^2}\\
\nonumber
&\quad \cdot 2\textup{Re}\Big[ (\overline{w}-\overline{z})K_n(z,w)\left(\overline{\varphi_{n+1}(z)(\varphi_{n+1}^*)^{\prime}(z)}-\overline{\varphi_{n+1}^{\prime}(z)\varphi_{n+1}^*(z)}\right)\\
\nonumber
&\qquad \qquad \cdot (\varphi_{n+1}(z)\varphi_{n+1}^*(w)-\varphi_{n+1}^*(z)\varphi_{n+1}(w)) \Big]\\
\label{Sig2}
&=o(|\phi_{n+1}(z)|^4|\phi_{n+1}(w)|^2).
\end{align}

For the sum $\Sigma_{n,3}(z,w)$, first notice using \eqref{CDUP} the first summand of \eqref{S3} and the second summand of \eqref{S6} sum to
\begin{equation}\label{S31}
-\frac{K_n(w,w)}{(1-|z|^2)^2}|(\varphi_{n+1}^*)^{\prime}(z)\varphi_{n+1}(z)-\varphi_{n+1}^*(z)\varphi_{n+1}^{\prime}(z)|^2.
\end{equation}
Appealing to \eqref{CDUP}, the remaining summand of \eqref{S3} simplifies as
\begin{align}
\nonumber
(|\varphi_{n+1}^{\prime}(z)|^2-&|(\varphi_{n+1}^*)^{\prime}(z)|^2)\\
\label{S32}
&\ \cdot \frac{|\varphi_{n+1}^*(z)\varphi_{n+1}^*(w)|^2+|\varphi_{n+1}(z)\varphi_{n+1}(w)|^2-2\textup{Re}\left(\varphi_{n+1}^*(z)\overline{\varphi_{n+1}^*(w)\varphi_{n+1}(z)}\varphi_{n+1}(w) \right)}{(1-|z|^2)|1-z\overline{w}|^2},
\end{align}
the remaining summand of \eqref{S6} reduces to
\begin{align}
\nonumber
&\frac{2K_n(z,z)\textup{Re}\left(\overline{(\varphi_{n+1}^*)^{\prime}(z)}\varphi_{n+1}^*(w)\varphi_{n+1}^{\prime}(z)\overline{\varphi_{n+1}(w)} \right)}{|1-z\overline{w}|^2}\\
\nonumber
&\quad -\frac{1}{(1-|z|^2)|1-z\overline{w}|^2}\Big[|(\varphi_{n+1}^*)^{\prime}(z)|^2\left(|\varphi_{n+1}^*(z)\varphi_{n+1}^*(w)|^2-|\varphi_{n+1}(z)\varphi_{n+1}^*(w)\right)\\
\label{S33}
&\qquad \qquad \qquad \qquad \qquad \quad  +|\varphi_{n+1}(z)|^2\left(|\varphi_{n+1}^*(z)\varphi_{n+1}(w)|^2-|\varphi_{n+1}(z)\varphi_{n+1}(w)\right)\Big],
\end{align}
and \eqref{S9} can be written as
\begin{align}
\nonumber
&\frac{2|(\varphi_{n+1}^*)^{\prime}(z)|^2}{(1-|z|^2)|1-z\overline{w}|^2}\left(|\varphi_{n+1}^*(z)\varphi_{n+1}^*(w)|^2-\textup{Re}\left( \overline{\varphi_{n+1}^*(z)}\varphi_{n+1}(z)\varphi_{n+1}^*(w)\overline{\varphi_{n+1}(w)}\right)  \right)\\
\nonumber
&\ - \frac{2|\varphi_{n+1}^{\prime}(z)|^2}{(1-|z|^2)|1-z\overline{w}|^2}\left(|\varphi_{n+1}(z)\varphi_{n+1}(w)|^2-\textup{Re}\left( \overline{\varphi_{n+1}^*(z)}\varphi_{n+1}(z)\varphi_{n+1}^*(w)\overline{\varphi_{n+1}(w)}\right)  \right)\\
\nonumber
& \ -\frac{2\left(|\varphi_{n+1}^*(w)|^2-|\varphi_{n+1}(w)|^2 \right)\textup{Re}\left( \overline{(\varphi_{n+1}^*)^{\prime}(z)}\varphi_{n+1}^*(z)\varphi_{n+1}^{\prime}(z)\overline{\varphi_{n+1}(z)}\right)}{(1-|z|^2)|1-z\overline{w}|^2}\\
\label{S34}
&\ -\frac{2K_n(z,z)\textup{Re}\left(\overline{(\varphi_{n+1}^*)^{\prime}(z)}\varphi_{n+1}^*(w)\varphi_{n+1}^{\prime}(z)\overline{\varphi_{n+1}(w)} \right)}{|1-z\overline{w}|^2}.
\end{align}
Simplifying the sum of expressions \eqref{S32}, \eqref{S33}, and \eqref{S34}, then combining with \eqref{S31} we achieve
\begin{align}
\nonumber
\Sigma_{n,3}(z,w)&=\frac{K_n(w,w)|(\varphi_{n+1}^*)^{\prime}(z)\varphi_{n+1}(z)-\varphi_{n+1}^*(z)\varphi_{n+1}^{\prime}(z)|^2}{1-|z|^2}
\left(\frac{1-|w|^2}{|1-z\overline{w}|^2}-\frac{1}{1-|z|^2} \right)\\
\label{Sig3}
&=o(|\phi_{n+1}(z)|^4|\phi_{n+1}(w)|^2).
\end{align}
Thus the sum of \eqref{Sig1}, \eqref{Sig2}, and \eqref{Sig3}, combine to give
\begin{align}
\label{ftilde}
\tilde{f}_n(z,w)&=|\phi_{n+1}(z)|^4|\phi_{n+1}(w)|^2\left( \frac{|z-w|^4}{(1-|z|^2)^4(1-|w|^2)|1-z\overline{w}|^4}+o(1) \right).
\end{align}

The calculations for $\tilde{g}_n(z,w)$ are done in a similar fashion as for $\tilde{f}(z,w)$.  Due the complication nature of the computations, we see fit to include the complete derivation of $\tilde{g}_n(z,w)$.  Let
$$S_4(z,w)=\eqref{s4}, \quad S_5(z,w)=\eqref{s5}, \quad S_6(z,w)=\eqref{s6}.$$

From \eqref{k01} and \eqref{k11} it follows that
\begin{align}
\label{S41}
S_{4}(z,w)&=\frac{(1+z\overline{w})K_n(z,w)}{(1-z\overline{w})^2}(K_n(z,z)K_n(w,w)-|K_n(z,w)|^2)\\
\label{S42}
&\quad +\frac{z\overline{S_n(w,z)}+\overline{w}S_n(z,w)}{(1-z\overline{w})^2}(K_n(z,z)K_n(w,w)-|K_n(z,w)|^2)\\
\label{S43}
&\quad + \frac{R_n(z,w)}{1-z\overline{w}}(K_n(z,z)K_n(w,w)-|K_n(z,w)|^2),\\
\label{S51}
S_5(z,w)&=-\frac{|z|^2K_n(z,z)K_n(w,w)K_n(z,w)}{(1-|z|^2)(1-z\overline{w})}-\frac{|w|^2K_n(z,z)K_n(w,w)K_n(z,w)}{(1-|w|^2)(1-z\overline{w})}\\
\label{S52}
&\quad -\frac{K_n(z,w)}{1-z\overline{w}}\left( \frac{zK_n(w,w)\overline{S_n(z,z)}}{1-|z|^2}+\frac{\overline{w}K_n(z,z)S_n(w,w)}{1-|w|^2} \right)\\
\label{S53}
&\quad -\frac{K_n(z,z)K_n(w,w)}{1-z\overline{w}}\left( \frac{\overline{z}S_n(z,w)}{1-|z|^2}+\frac{w\overline{S_n(w,z)}}{1-|w|^2}\right)\\
\label{S54}
&\quad -\frac{K_n(w,w)\overline{S_n(z,z)}S_n(z,w)}{(1-|z|^2)(1-z\overline{w})}-\frac{K_n(z,z)S_n(w,w)\overline{S_n(w,z)}}{(1-|w|^2)(1-z\overline{w})},\\
\label{S61}
S_6(z,w)&=\frac{\overline{z}wK_n(z,z)K_n(w,w)K_n(z,w)}{(1-|z|^2)(1-|w|^2)}+\frac{z\overline{w}K(z,w)|K(z,w)|^2}{(1-z\overline{w})^2}\\
\label{S62}
&\quad +K(z,w)\left( \frac{wK_n(w,w)\overline{S_n(z,z)}+\overline{z}K_n(z,z)S_n(w,w)}{(1-|z|^2)(1-|w|^2)} \right)\\
\label{S63}
&\quad +K(z,w)\left( \frac{\overline{wK_n(z,w)}S_n(z,w)+z\overline{K_n(z,w)S_n(w,z)}}{(1-z\overline{w})^2} \right)\\
\label{S64}
&\quad + \frac{K_n(z,w)\overline{S_n(z,z)}S_n(w,w)}{(1-|z|^2)(1-|w|^2)}+\frac{\overline{K_n(z,w)S_n(w,z)}S_n(z,w)}{(1-z\overline{w})^2}.
\end{align}

We now set
\begin{align*}
\Sigma_{n,4}(z,w)&:=\eqref{S41}+\eqref{S51}+\eqref{S61}\\
\Sigma_{n,5}(z,w)&:= \eqref{S42}+\eqref{S52}+\eqref{S53}+\eqref{S62}+\eqref{S63}\\
\Sigma_{n,6}(z,w)&:= \eqref{S43}+\eqref{S54}+\eqref{S64}.
\end{align*}

Simplifying then using the relations \eqref{CDUP}, \eqref{phiasy}, and \eqref{dphiasy} yields
\begin{align}
\nonumber
\Sigma_{n,4}(z,w)&=K_n(z,z)K_n(w,w)K_n(z,w)\left(\frac{1+\overline{z}(w-2z)+\overline{w}(z-2w+w|z|^2)}{(1-|z|^2)(1-|w|^2)(1-z\overline{w})^2}  \right)
 -\frac{K_n(z,w)|K_n(z,w)|^2}{(1-z\overline{w})^2}\\
\nonumber
&=|\phi_{n+1}(z)|^2|\phi_{n+1}(w)|^2\phi_{n+1}(z)\overline{\phi_{n+1}(w)}\\
\nonumber
&\quad \cdot \left( \frac{1+\overline{z}(w-2z)+\overline{w}(z-2w+w|z|^2)}{(1-|z|^2)^2(1-|w|^2)^2(1-z\overline{w})^3}-\frac{1}{(1-z\overline{w})^4(1-\overline{z}w)} +o(1)\right)\\
\label{Sign4}
&=|\phi_{n+1}(z)|^2|\phi_{n+1}(w)|^2\phi_{n+1}(z)\overline{\phi_{n+1}(w)}\left( \frac{-|z-w|^4}{(1-|z|^2)^2(1-|w|^2)^2(1-z\overline{w})^4(1-\overline{z}w)}+o(1) \right).
\end{align}

To begin calculating $\Sigma_{n,5}$, first we combine \eqref{S42}, \eqref{S53}, and \eqref{S63}, to give
\begin{equation}
\label{Sign51}
\frac{K_n(z,z)K_n(w,w)}{(1-z\overline{w})^2}\left( \frac{(z-w)\overline{S_n(w,z)}}{1-|w|^2}-\frac{(\overline{z}-\overline{w})S_n(z,w)}{1-|z|^2} \right).
\end{equation}
Now combing \eqref{S52} with \eqref{S62} yields
\begin{align}
\label{Sign52}
&\frac{K_n(z,w)K_n(w,w)\overline{S_n(z,z)}}{1-|z|^2}\left( \frac{w}{1-|w|^2}-\frac{z}{1-z\overline{w}} \right)+\frac{K_n(z,w)K_n(z,z)S_n(w,w)}{1-|w|^2}\left( \frac{\overline{z}}{1-|z|^2}-\frac{\overline{w}}{1-z\overline{w}} \right).
\end{align}

Summing \eqref{Sign51} with \eqref{Sign52} then using \eqref{CDUP}, \eqref{k01}, \eqref{k11} to simplify the expressions, and finally appealing the asymptotics of \eqref{phiasy} and \eqref{dphiasy} on sees
\begin{align}
\nonumber
\Sigma_{n,5}(z,w)&=\frac{1}{(1-|z|^2)(1-|w|^2)(1-z\overline{w})^2}\\
\nonumber
&\cdot\Big[(z-w)K_n(w,w)(\varphi_{n+1}(z)(\varphi_{n+1}^*)^{\prime}(z)-\varphi_{n+1}^*(z)\varphi_{n+1}^{\prime}(z))\\
\nonumber
&\quad \quad  \cdot(\overline{\varphi_{n+1}^*(z)\varphi_{n+1}(w)}-\overline{\varphi_{n+1}(z)\varphi_{n+1}^*(w)}  )\\
\nonumber
&\quad +(\overline{z}-\overline{w})K_n(z,z)(\overline{\varphi_{n+1}(w)(\varphi_{n+1}^*)^{\prime}(w)}-\overline{\varphi_{n+1}^*(w)\varphi_{n+1}^{\prime}(w)})\\
\nonumber
&\quad \quad  \cdot(\varphi_{n+1}^*(z)\varphi_{n+1}(w)-\varphi_{n+1}(z)\varphi_{n+1}^*(w)  )\Big]\\
\label{Sign5end}
&=o(|\phi_{n+1}(z)|^2|\phi_{n+1}(w)|^2\phi_{n+1}(z)\overline{\phi_{n+1}(w)}).
\end{align}

Turning now to $\Sigma_{n,6}(z,w)$, combining the first summands of \eqref{S43} and \eqref{S54} and using \eqref{CDUP}, \eqref{k01}, \eqref{k11} to further simplify gives
\begin{align}
\label{Sign61}
\frac{K_n(w,w)}{(1-|z|^2)(1-z\overline{w})}&((\varphi_{n+1}^*)^{\prime}(z)\varphi_{n+1}(z)-\varphi_{n+1}^*(z)\varphi_{n+1}^{\prime}(z))(\overline{\varphi_{n+1}^{\prime}(w)\varphi_{n+1}^*(z)}-\overline{\varphi_{n+1}(z)(\varphi_{n+1}^*)^{\prime}(w)}).
\end{align}
From the relations \eqref{CDUP}, \eqref{k01}, and \eqref{k11}, the sum of second summands of \eqref{S43} and \eqref{S64} reduces to
\begin{align}
\label{Sign62}
\frac{\overline{K_n(z,w)}}{(1-z\overline{w})^2}&(\varphi_{n+1}(z)(\varphi_{n+1}^*)^{\prime}(z)-\varphi_{n+1}^{\prime}(z)\varphi_{n+1}^*(z))
(\overline{(\varphi_{n+1}^*)^{\prime}(w)\varphi_{n+1}(w)}-\overline{\varphi_{n+1}^*(w)\varphi_{n+1}^{\prime}(w)}).
\end{align}

Using \eqref{CDUP}, \eqref{k01}, and \eqref{k11}, second summand of \eqref{S54} with first summand of \eqref{S64} combine to yield
\begin{align}
\nonumber
\frac{(1-\overline{z}w)\overline{K_n(z,w)}}{(1-|z|^2)(1-|w|^2)(1-z\overline{w})}&(\varphi_{n+1}(z)(\varphi_{n+1}^*)^{\prime}(z)-\varphi_{n+1}^*(z)\varphi_{n+1}^{\prime}(z))\\
\label{Sign63}
& \quad \cdot (\overline{\varphi_{n+1}^*(w)\varphi_{n+1}^{\prime}(w)}-\overline{\varphi_{n+1}(w)(\varphi_{n+1}^*)^{\prime}(w)}).
\end{align}

Combining \eqref{Sign61}, \eqref{Sign62}, and \eqref{Sign63}, then again appealing \eqref{CDUP}, \eqref{k01}, \eqref{k11} to simplify the expressions, and using the asymptotics of \eqref{phiasy} and \eqref{dphiasy} it follows that
\begin{align}
\nonumber
\Sigma_{n,6}(z,w)&=\frac{\overline{K_n(z,w)}}{1-z\overline{w}}((\varphi_{n+1}^*)^{\prime}(z)\varphi_{n+1}(z)-\varphi_{n+1}^*(z)\varphi_{n+1}^{\prime}(z))
(\overline{\varphi_{n+1}^*(w)\varphi_{n+1}^{\prime}(w)}-\overline{\varphi_{n+1}(w)(\varphi_{n+1}^*)^{\prime}(w)})\\
\nonumber
& \quad \cdot \left(\frac{1-\overline{z}w}{(1-|z|^2)(1-|w|^2)}-\frac{1}{1-z\overline{w}} \right)\\
\label{Sign6end}
&=o(|\phi_{n+1}(z)|^2|\phi_{n+1}(w)|^2\phi_{n+1}(z)\overline{\phi_{n+1}(w)}).
\end{align}

Summing \eqref{Sign4}, \eqref{Sign5end}, and \eqref{Sign6end}, we see that
\begin{align}
\label{gtilde}
\tilde{g}_n(z,w)&=|\phi_{n+1}(z)\phi_{n+1}(w)|^2\phi_{n+1}(z)\overline{\phi_{n+1}(w)}
\left( \frac{-|z-w|^4}{(1-|z|^2)^2(1-|w|^2)^2(1-z\overline{w})^4(1-\overline{z}w)}+o(1) \right).
\end{align}

Combining \eqref{fsecint3}, \eqref{rhodeom}, \eqref{ftilde}, and \eqref{gtilde}, we therefore achieve
\begin{align*}
\pi^2\rho_n^{(2)}(z,w)&=\left(|\phi_{n+1}(z)\phi_{n+1}(w)|^6\left(\left(\frac{1}{(1-|z|^2)(1-|w|^2)}-\frac{1}{|1-z\overline{w}|^2} \right)^3+o(1)\right)\right)^{-1}\\
&\quad \cdot \Bigg[|\phi_{n+1}(z)\phi_{n+1}(w)|^6\left( \frac{|z-w|^4}{(1-|z|^2)^4(1-|w|^2)|1-z\overline{w}|^4}+o(1) \right) \\
&\quad \quad \quad \cdot \left( \frac{|z-w|^4}{(1-|w|^2)^4(1-|z|^2)|1-z\overline{w}|^4}+o(1) \right)\\
&\quad \quad +|\phi_{n+1}(z)\phi_{n+1}(w)|^6
\left( \frac{-|z-w|^4}{(1-|z|^2)^2(1-|w|^2)^2(1-z\overline{w})^4(1-\overline{z}w)}+o(1) \right)\\
&\quad \quad \quad \cdot 
\left( \frac{-|z-w|^4}{(1-|z|^2)^2(1-|w|^2)^2(1-\overline{z}w)^4(1-z\overline{w})}+o(1) \right)\Bigg]\\
&=\frac{1}{(1-|z|^2)^2(1-|w|^2)^2}-\frac{1}{|1-z\overline{w}|^4}+o(1),\quad  \text{as} \quad n \rightarrow \infty.
\end{align*}
\end{proof}

\begin{proof}[Proof of Theorem \ref{VAROPS}]
Appealing to  \eqref{varform} and \eqref{fstinop} gives
\begin{align}
\nonumber
\lim_{n\rightarrow \infty}&\textup{Var}[N_n(A(s,t))]=\frac{1}{\pi}\int_{A(s,t)}\frac{1}{(1-|z|^2)^2} dA(z)\\
\nonumber
&\quad +\frac{1}{\pi^2}\int_{A(s,t)}\int_{A(s,t)}\left(\frac{1}{(1-|z|^2)^2(1-|w|^2)^2}-\frac{1}{|1-z\overline{w}|^4}\right) dA(z) dA(w)\\
\nonumber
&\quad -\frac{1}{\pi^2}\int_{A(s,t)}\int_{A(s,t)}\frac{1}{(1-|z|^2)^2(1-|w|^2)^2}dA(z) dA(w)\\
\label{lfintint}
&=\frac{1}{\pi}\int_{A(s,t)}\frac{1}{(1-|z|^2)^2} dA(z)\\
\label{ssecint2}
&\quad -\frac{1}{\pi^2}\int_{A(s,t)}\int_{A(s,t)}\frac{1}{(1-z\bar{w})^2(1-\bar{z}w)^2} dA(z)  dA(w),
\end{align}
where we have used that the convergence of \eqref{fstinop} and \eqref{secintop} are locally uniform on annuli that do not contain the unit circle so that we can pass the limit over the integration.  We remark that formally we need to consider a closed annulus the does not contain the unit circle in the above.  However, since the measure associated to the integrals is Lebesgue area measure which is absolutely continuous, and the limiting values above are continuous functions away from the unit circle, we have that the boundary of the closed annulus has measure zero.  Hence we just consider the open annulus $A(s,t)$.

Observe that for an annulus $A(s,t)$ not containing the unit circle, \eqref{lfintint} simply integrates as
\begin{align}\label{lfintint2}
\frac{1}{\pi}\int_{A(s,t)}\frac{1}{(1-|z|^2)^2} \ dz
=\frac{t^2-s^2}{(1-t^2)(1-s^2)}.
\end{align}

We will compute \eqref{ssecint2} separately depending on whether or not the annulus is contained within the unit disk.  We first consider the case when the annulus is contained with in the unit disk.  Since for $x,y\in \D$ we have
$$\frac{1}{(1-xy)^2}=\sum_{n=0}^{\infty}(n+1)(xy)^n,$$
switching to polar coordinates with $z=re^{i\theta}$ and $w=ue^{i\phi}$, the double integral \eqref{ssecint2} becomes
\begin{equation}\label{secint21}
\frac{1}{\pi^2}\int_0^{2\pi}\int_s^t\int_0^{2\pi}\int_s^t\left(\sum_{k=0}^{\infty}(k+1)(ru)^ke^{ik(\theta-\phi)}\sum_{n=0}^{\infty}(n+1)(ru)^ne^{in(\phi-\theta)}\right)ru dr d\theta du d\phi.
\end{equation}
By the convergence of the above series being locally uniform in the unit disk, we can interchange the infinite sums and integrals.  After expanding the product of sums, any sum with $n\neq k$ will give
\begin{equation*}
\int_0^{2\pi}e^{i(k-n)\theta}d\theta=0,
\end{equation*}
and similarly for the terms with $\phi$. Thus only the terms $n=k$ will survive the integration.  This yields \eqref{secint21} is now
\begin{align*}
4\int_s^t\int_s^t\sum_{k=0}^{\infty}(k+1)^2(ru)^{2k+1} dr du&=2\int_s^t\sum_{k=0}^{\infty}(k+1)u^{2k+1}(t^{2k+2}-s^{2k+2})du\\
&=\sum_{k=0}^{\infty}(t^{2k+2}-s^{2k+2})^2\\
&=\frac{t^4}{1-t^4}-2\frac{(st)^{2}}{1-(st)^2}+\frac{s^4}{1-s^4}\\
&=\frac{(t^2-s^2)^2(1+(st)^2)}{(1-t^4)(1-s^4)(1-(st)^2)}.
\end{align*}
Therefore, combining the above with \eqref{lfintint2} we see that for $A(s,t)\subsetneq \D$
\begin{align*}
\lim_{n\rightarrow \infty}\textup{Var}[N_n(A(s,t))]&=\frac{1}{\pi}\int_{A(s,t)}\frac{1}{(1-|z|^2)^2}\ dz \\
&\quad-\frac{1}{\pi^2}\int_{A(s,t)}\int_{A(s,t)}\frac{1}{(1-z\bar{w})^2(1-\bar{z}w)^2}\ dz \ dw\\
&=\frac{t^2-s^2}{(1-t^2)(1-s^2)}-\left( \frac{(t^2-s^2)^2(1+(st)^2)}{(1-t^4)(1-s^4)(1-(st)^2)} \right)\\
&=\frac{(t^2-s^2)[1-s^2(t^4(2+s^2)-2)]}{(1-t^4)(1-s^4)(1-(st)^2)}.
\end{align*}

When $A(s,t)\subset \C\setminus \overline{\D}$, notice that for $x,y\in \C\setminus \overline{\D}$ it follows that
$$\frac{1}{(1-xy)^2}=\frac{1}{(xy)^2(1-(xy)^{-1})^2}=\frac{1}{(xy)^2}\sum_{n=0}^{\infty}(n+1)(xy)^{-n}.$$
Thus setting $x=z=re^{i \theta}$ and $y=w=ue^{i\phi}$ we have
\begin{align*}
\frac{1}{(1-z\overline{w})^2}\frac{1}{(1-\overline{z}w)^2}=\frac{1}{(ru)^4}\sum_{k=0}^{\infty}(k+1)(ru)^{-k}e^{ik(\phi-\theta)}\sum_{n=0}^{\infty}(n+1)(ru)^{-n}e^{in(\theta-\phi)}.
\end{align*}
As in the case in the unit disk, due to the orthogonality of the exponential function, within the integral of \eqref{ssecint2} only the terms when $n=k$ will give a nonzero integration to hence yield that \eqref{ssecint2} is
\begin{align*}
4\int_s^t\int_s^t\sum_{k=0}^{\infty}(k+1)^2(ru)^{-2k-3} dr du&=-2\int_s^t\sum_{k=0}^{\infty}(k+1)u^{-2k-3}(t^{-2k-2}-s^{-2k-2})du\\
&=\sum_{k=0}^{\infty}(t^{-2k-2}-s^{-2k-2})^2\\
&=\frac{t^4}{1-t^4}-2\frac{(st)^{2}}{1-(st)^2}+\frac{s^4}{1-s^4}\\
&=\frac{-(t^2-s^2)^2(1+(st)^2)}{(1-t^4)(1-s^4)(1-(st)^2)}.
\end{align*}
Combining the above with \eqref{lfintint2}, we conclude that for $A(s,t)\subset \C\setminus \overline{\D}$ it follows that
\begin{align*}
\lim_{n\rightarrow \infty}\textup{Var}[N_n(A(s,t))]&=\frac{1}{\pi}\int_{A(s,t)}\frac{1}{(1-|z|^2)^2}\ dz\\
&\quad  -\frac{1}{\pi^2}\int_{A(s,t)}\int_{A(s,t)}\frac{1}{(1-z\bar{w})^2(1-\bar{z}w)^2}\ dz \ dw\\
&=\frac{t^2-s^2}{(1-t^2)(1-s^2)}-\left( \frac{-(t^2-s^2)^2(1+(st)^2)}{(1-t^4)(1-s^4)(1-(st)^2)} \right)\\
&=\frac{t^2-s^2}{(1-t^2)(1-s^2)}+ \frac{(t^2-s^2)^2(1+(st)^2)}{(1-t^4)(1-s^4)(1-(st)^2)}\\
&=\frac{(t^2-s^2)[1-t^2(s^4(2+t^2)-2)]}{(1-t^4)(1-s^4)(1-(st)^2)}.
\end{align*}

\end{proof}

\textbf{Acknowledgements.}  The author would like thank his P.h.D. advisor Igor Pritsker for initiating this project, as well as for the many helpful discussions that led to the results.

\Addresses

\end{document}